\newtheorem{theorem}{\sc Theorem}[section]
\newtheorem{lemma}[theorem]{\sc Lemma}
\begin{document}
\title[BFC-theorems for approximate subgroups]{Bounded conjugacy classes, commutators, and approximate subgroups}

\author{Pavel Shumyatsky }
\address{ Pavel Shumyatsky: Department of Mathematics, University of Brasilia,
Brasilia-DF, 70910-900 Brazil}
\email{pavel@unb.br}
\thanks{This research was supported by the Conselho Nacional de Desenvolvimento Cient\'{\i}fico e Tecnol\'ogico (CNPq),  and Funda\c c\~ao de Apoio \`a Pesquisa do Distrito Federal (FAPDF), Brazil.}
\keywords{Conjugacy classes, commutators, approximate subgroups}
\subjclass[2010]{20N99,20F12,20F24}

\begin{abstract} Given a group $G$, we write $g^G$ for the conjugacy class of $G$ containing the element $g$. A famous theorem of B. H. Neumann states that if $G$ is a group in which all conjugacy classes are finite with bounded size, then the commutator subgroup $G'$ is finite. We establish the following results.

\noindent Let $K,n$ be positive integers and $G$ a group having a $K$-approximate subgroup $A$. 

\noindent If $|a^G|\leq n$ for each $a\in A$, then the commutator subgroup of $\langle A^G\rangle$ has finite $(K,n)$-bounded order.

\noindent If $|[g,a]^G|\leq n$ for all $g\in G$ and $a\in A$, then the commutator subgroup of $[G,A]$ has finite $(K,n)$-bounded order. 
\end{abstract}

\maketitle

\section{Introduction} Let $K$ be a positive integer. A symmetric subset $A$ of a group $G$ is a $K$-approximate subgroup if there is a finite subset $E\subseteq G$ such that $|E|\leq K$ and $AA\subseteq EA$. The formal definition of an approximate subgroup was introduced by Tao in \cite{tao}. Since then many important results on approximate subgroups have been established. In particular, Breuillard, Green and Tao essentially described the structure of finite approximate subgroups \cite{bgt}. The reader is referred to the recent book \cite{tointon}, or the surveys \cite{breu1,breu2}, for detailed information on these developments. 

An interesting principle was stated in \cite{breu2}:\bigskip

 {\it Group-theoretical arguments can often be successfully transferred to approximate subgroups}.
\bigskip

In the present article we check this principle against certain variations of B. H. Neumann's theorem that a BFC-group has finite commutator subgroup.

Given a group $G$ and an element $x\in G$, we write $x^G$ for the conjugacy class containing $x$. More generally, if $X,Y\subseteq G$, we write $X^Y$ for the set of all $x^y$, where $x\in X$ and $y\in Y$. Of course, if the number of elements in $x^G$ is finite, we have $|x^G|=[G:C_G(x)]$. A group is called a BFC-group if its conjugacy classes are finite and have bounded size. In 1954 B. H. Neumann discovered that the commutator subgroup $G'$ of a BFC-group $G$ is finite \cite{bhn}. It follows that if $|x^G|\leq n$ for each $x\in G$, then $G'$ has finite $n$-bounded order. Throughout the article we use the expression ``$(a,b,\dots)$-bounded" to mean that a quantity is finite and bounded by a certain number depending only on the parameters $a,b,\dots$. A first explicit bound for the order of $G'$ was found by J. Wiegold \cite{wie}, and the best known was obtained in \cite{gumaroti} (see also \cite{neuvoe} and \cite{sesha}). The article \cite{dieshu} deals with groups $G$ in which conjugacy classes containing commutators are bounded. It is shown that if $|x^G|\leq n$ for any commutator $x$, then the second commutator subgroup $G''$ has finite $n$-bounded order. Later this was extended in \cite{dms} to higher commutator subgroups. A related result for groups in which the conjugacy classes containing squares have finite bounded sizes was obtained in \cite{squares}.

A stronger version of the Neumann theorem was recently established in \cite{ashu}:
\bigskip

\noindent {\it Let $n$ be a positive integer and $G$ a group having a subgroup $A$ such that $|a^G|\leq n$ for each $a\in A$. Then the commutator subgroup of $\langle A^G\rangle$ has finite $n$-bounded order.}
\bigskip

Here, as usual, $\langle X\rangle$ denotes the subgroup generated by the set $X$ and so $\langle A^G\rangle$ denotes the minimal normal subgroup containing $A$.

In the present paper we extend the above result as follows.

\begin{theorem}\label{main1} Let $K,n$ be positive integers and $G$ a group having a $K$-approximate subgroup $A$ such that $|a^G|\leq n$ for each $a\in A$. Then the commutator subgroup of $\langle A^G\rangle$ has finite $(K,n)$-bounded order.
\end{theorem}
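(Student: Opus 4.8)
The plan is to reduce to the subgroup version of Neumann's theorem recalled above, i.e. to the result of \cite{ashu}. Throughout put $N=\langle A^G\rangle$, so that the goal is a $(K,n)$-bound on $|N'|$.

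First I would exploit a soft consequence of the approximate-subgroup axiom: after replacing the translating set by a symmetric subset $E\subseteq A^3$ with $1\in E$ and $|E|$ of $K$-bounded size, one gets $A^m\subseteq E^{m-1}A$ for every $m$, hence $\langle A\rangle=\langle E\rangle A$; and since $E\subseteq A^3$ the normal subgroup $M_0:=\langle E^G\rangle$ of $G$ is contained in $N$. Two features of $M_0$ will be used. On the one hand each $e\in E$ is a product of three elements of $A$, so $|e^G|\le n^3$; since $E$ is $K$-bounded, $E^G$ is a finite $G$-invariant set of $(K,n)$-bounded cardinality consisting of elements of $(K,n)$-bounded class size, whence $[G:C_G(M_0)]$ is $(K,n)$-bounded and $M_0$ is a BFC-group. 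On the other hand, in $\overline G=G/M_0$ the image of $E$ is trivial, so $\overline A\,\overline A\subseteq\overline A$; being symmetric this makes $\overline A$ an honest subgroup of $\overline G$, with $|\overline a^{\overline G}|\le n$ for all $\overline a$ and $\langle\overline A^{\overline G}\rangle=N/M_0$. Applying \cite{ashu} in $\overline G$ now gives that $(N/M_0)'$ has finite $n$-bounded order; since $(N/M_0)'\cong N'/(N'\cap M_0)$, it remains only to bound $|N'\cap M_0|$.

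The hard part will be precisely this last bound. Several facts are cheap: $M_0$ is a finitely generated BFC-group (it is generated by the finite set $E^G$), hence polycyclic-by-finite, hence Noetherian, so $N'\cap M_0$ is finitely generated and its torsion-free rank is bounded by that of $M_0^{\mathrm{ab}}$, which is $(K,n)$-bounded; also $|M_0'|$ is $(K,n)$-bounded and $N$ acts on $M_0$ through a $(K,n)$-bounded quotient. Using that $[M_0,N]\le N'\cap M_0$ and that $M_0/[M_0,N]$ is central in $N/[M_0,N]$, one gets $|N'\cap M_0|=|[M_0,N]|\cdot\bigl|(N/[M_0,N])'\cap(M_0/[M_0,N])\bigr|$, and the whole question collapses to: (i) $[M_0,N]$ is finite (equivalently, $N$ acts trivially on $M_0^{\mathrm{ab}}\otimes\mathbb Q$), and (ii) the resulting finite groups have $(K,n)$-bounded order. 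The obstruction is that these cheap estimates by themselves only control \emph{finitely generated} bad behaviour; they do not yet exclude an infinite $N$-invariant abelian section of $M_0$ — the sort of thing that genuinely occurs when a generating set of bounded conjugacy classes fails to be an approximate subgroup. To rule it out one must feed the approximate-subgroup hypothesis back in: $N$ is generated by the classes $a^G$ $(a\in A)$, each generator $[m,v]$ of $[M_0,N]$ with $m\in E^G$ lies in $\langle m^G\rangle$, which is itself a BFC-group with $(K,n)$-bounded parameters, and combining this with the Step 1 conclusion for $N/M_0$ should force the offending torsion-free sections to vanish. With (i) established, $[M_0,N]$ and $N'\cap M_0$ are generated by finite $G$-invariant sets of elements of bounded order and bounded class, so Dietzmann's lemma supplies the $(K,n)$-bound, and $|N'|=|(N/M_0)'|\cdot|N'\cap M_0|$ completes the proof.
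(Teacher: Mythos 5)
Your reduction has an attractive first half: passing to $\overline G=G/M_0$ with $M_0=\langle E^G\rangle$ does turn $\overline A$ into a genuine subgroup with classes of size at most $n$, and invoking \cite{ashu} there correctly bounds $(N/M_0)'\cong N'/(N'\cap M_0)$. The observations that $|e^G|\le n^3$ for $e\in E$, that $E^G$ is a normal set of $(K,n)$-bounded size, and hence that $C_G(M_0)$ has bounded index and $M_0'$ is bounded, are all fine. But the proof of the theorem is then entirely concentrated in the bound for $|N'\cap M_0|$, and there you have two genuine gaps. The first is step (i): you assert that $[M_0,N]$ is finite and say the approximate-subgroup hypothesis ``should force the offending torsion-free sections to vanish,'' but no argument is given. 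Bounded class sizes alone do not do this: in the infinite dihedral group with $A=\{t,t^{-1}\}$ one gets $M_0=\langle t\rangle$ and $[M_0,G]=\langle t^2\rangle$ infinite, so finiteness of $[M_0,N]$ really depends on $N$ being generated by $A^G$ and is not a consequence of your ``cheap estimates.'' This is exactly what the paper's Lemma \ref{23} supplies: for a normal set $X$ with $|x^G|\le n$ and $H=\langle X\rangle$, the subgroup $[H,x]$ is generated by commutators $[y_i,x]$ with $l_X(y_i)\le m-1$ (Lemma \ref{21}), all of which live in the derived subgroup of a subgroup $T$ generated by boundedly many elements of bounded class, so Schur's theorem bounds $[H,x]$ and hence $\langle[H,x]^G\rangle$. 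Applied with $X=A^G$, $H=N$ and the elements $x_{ij}\in A$ with $e_i=x_{i1}x_{i2}x_{i3}^{-1}$, this bounds $\langle[N,E]^G\rangle\ge[N,M_0]$ and would close your step (i); without it, (i) is unproved.

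The second gap is worse because you do not flag it: even granting (i), your factorization leaves the term $(N/[M_0,N])'\cap(M_0/[M_0,N])$, a central subgroup of $\bar N=N/[M_0,N]$ contained in $\bar N'$, and finiteness of $(\bar N/\bar M_0)'$ does \emph{not} imply finiteness of such a central piece (a free nilpotent group of class $2$ has $(N/Z)'=1$ but $N'\cong\mathbb Z$ central). Your closing claim that $N'\cap M_0$ is ``generated by a finite $G$-invariant set of elements of bounded order'' is exactly what needs proof and is nowhere justified, so Dietzmann's lemma has nothing to act on. This is the point where the paper declines to quotient by all of $M_0$: it only kills $\langle[N,E]^G\rangle$, so that $E$ becomes central in $N$ but the approximate structure survives, and then bounds $[N,d]$ for every $d\in A$ directly via the covering of $A$ by boundedly many cosets of $U=C_G(\langle b_1,\dots,b_m\rangle)$ together with Lemma \ref{uxx} ($[H,u]\le[H,a]$ for $u\in A\cap U$). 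Since $N'=\langle[N,A]^G\rangle$, this yields $N'\le ST$ with no central remainder left over. To salvage your outline you would need to import precisely that covering argument to control $N'\cap M_0$, at which point the detour through $G/M_0$ and \cite{ashu} buys little.
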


For a subset $X$ of a group $G$ we write $[G,X]$ to denote the subgroup generated by all commutators $[g,x]$, where $g\in G$ and $x\in X$. It is well-known that $[G,X]$ is a normal subgroup of $G$. Moreover, $[G,X]=[G,\langle X\rangle]$. We examine approximate subgroups $A\subseteq G$ such that the conjugacy classes of commutators $[g,a]$ have bounded sizes whenever $g\in G$ and $a\in A$.

\begin{theorem}\label{main2} Let $K,n$ be positive integers and $G$ a group having a $K$-approximate subgroup $A$ such that $|[g,a]^G|\leq n$ for all $g\in G$ and $a\in A$. Then the commutator subgroup of $[G,A]$ has finite $(K,n)$-bounded order.
\end{theorem}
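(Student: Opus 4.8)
The plan is to recast the conclusion as a statement that a single normal subgroup is BFC, and then to bring in the approximate‐subgroup hypothesis to establish that statement. Set $D:=[G,A]$, so $D=[G,\langle A\rangle]\trianglelefteq G$. The commutator identities $[g,a_{1}a_{2}]=[g,a_{2}]\,[g,a_{1}]^{a_{2}}$ and $[g_{1}g_{2},a]=[g_{1},a]^{g_{2}}[g_{2},a]$ give, by induction on word length, that $D$ is generated by the $G$‐invariant set $Y=\{[g,a]^{x}:g,x\in G,\ a\in A\}$, every member of which has at most $n$ conjugates in $G$, hence at most $n$ conjugates in $D$. Thus the assertion of the theorem is equivalent to: $D$ is a BFC‐group with $(K,n)$‐bounded parameter. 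Indeed, given that, B.~H. Neumann's theorem \cite{bhn} (in the quantitative form of \cite{gumaroti}) yields that $|D'|=|[G,A]'|$ is $(K,n)$‐bounded, and conversely the conclusion returns the BFC‐statement. The recurring tool is the elementary fact that if a subgroup of $G$ is generated by $m$ elements each with at most $q$ conjugates in $G$, then its centre has index at most $q^{m}$, so its derived subgroup has $(m,q)$‐bounded order by Schur's theorem. The difficulty is that $D$ is in general \emph{not} generated by boundedly many members of $Y$ (for $A=G$ already, $D=G'$ may fail to be finitely generated), so this cannot be applied to $D$ itself without extra information — and that information must come from $A$ being approximate.

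Next I would feed in the approximate‐subgroup hypothesis. From $AA\subseteq EA$ with $|E|\le K$ one has $A^{m}\subseteq E^{m-1}A$ for all $m$, so $\langle A\rangle=\langle A,E\rangle$ and $\langle A\rangle$ is covered at each level by $K^{m-1}$ translates of $A$. Two things should be extracted. (i) Taking $g\in\langle A\rangle$ in the hypothesis makes every commutator of two elements of $A$ have at most $n$ conjugates in $\langle A\rangle$; using the covering relations together with the circle of ideas behind Theorem \ref{main1}, one reduces — modulo a normal subgroup of $G$ of $(K,n)$‐bounded order, which alters $|D'|$ only by a $(K,n)$‐bounded factor — to the case that $\langle A\rangle$ is abelian, so that $D=[G,\langle A\rangle]$ is the product of the normal subgroups $N_{a}:=[G,\langle a\rangle]$, $a\in A$. (ii) For a single $a\in A$, the set $a^{-1}a^{G}$, which generates $N_{a}$ modulo $N_{a}'$, is of ``coset type'': any two of its members differ by a $G$‐conjugate of a member, by the identity $[g_{1},a]^{-1}[g_{2},a]=[g_{2}g_{1}^{-1},a]^{g_{1}}$; since all these commutators have at most $n$ conjugates in $G$, one shows that $N_{a}$ is itself BFC with $n$‐bounded parameter, so that $|N_{a}'|$ is $n$‐bounded.

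The remaining, and hardest, step is to pass from the individual factors $N_{a}$ to their product $D=\prod_{a\in A}N_{a}$ and conclude that $D$ is BFC with $(K,n)$‐bounded parameter. A product of infinitely many BFC normal subgroups need not be BFC — a restricted direct product of copies of $S_{3}$ is the standard obstruction, and if such a group could arise as $[G,A]$ the theorem would be false — so what must be shown is that the approximate structure of $A$ forbids the commutators $[g,a]$ ($g\in G$, $a\in A$) from being ``spread out'' over infinitely many independent coordinates. Concretely, the coarse covering relations $A^{m}\subseteq E^{m-1}A$ have to be converted into the quantitative statement that $[D:Z(D)]$ is $(K,n)$‐bounded (equivalently, that the joint centraliser in $D$ of $(K,n)$‐boundedly many members of $Y$ already equals $Z(D)$), after which Schur's theorem finishes. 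I expect this conversion — making precise why an \emph{approximate} subgroup, unlike an arbitrary $G$‐invariant set of bounded‐conjugacy‐class generators, cannot produce such a spread‐out $D$ — to be the crux of the whole argument, and the concrete form in which the informal transfer principle of \cite{breu2} quoted in the introduction is realised here.
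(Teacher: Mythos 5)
Your reduction has two genuine problems. First, step (i) is false as stated: the hypothesis bounds only the classes $|[g,a]^G|$, not $|a^G|$, so Theorem \ref{main1} is not available and $\langle A\rangle'$ need not be finite at all. Take $G$ free nilpotent of class $2$ on two generators and $A=G$ (a $1$-approximate subgroup): every commutator is central, so the hypothesis holds with $n=1$, yet $\langle A\rangle'=G'$ is infinite cyclic and cannot be made trivial modulo a normal subgroup of bounded order. Fortunately you do not need $\langle A\rangle$ abelian for the decomposition you want: $[G,A]=\prod_{a\in A}[G,a]$ holds in any case, since each $[G,a]=[G,\langle a\rangle]$ is normal in $G$. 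Second, and more seriously, the step you yourself call the crux --- passing from the individual factors $N_a$ to their product $D$ --- is not carried out at all, and the target you propose for it, that $[D:Z(D)]$ is $(K,n)$-bounded, is strictly stronger than the theorem and is the wrong target: a group with finite derived subgroup can have centre of infinite index (an infinite extraspecial $p$-group is BFC with classes of size at most $p$ but $[D:Z(D)]$ infinite), so there is no reason to expect Schur's theorem to be applicable to $D$ itself. The proposal therefore stops exactly where the real work begins.

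For comparison, the paper's argument avoids both issues by never trying to bound $[H:Z(H)]$ for $H=[G,A]$. Instead it shows that $[H,[G,a]]$ is contained in one fixed normal subgroup $S$ of $n$-bounded order for \emph{every} $a\in A$; since $H=\prod_{a\in A}[G,a]$, this immediately gives $H'\leq S$. The mechanism is: Lemma \ref{one} shows each $[H,[G,a]]$ is individually $n$-bounded (this is essentially your step (ii), proved by fixing a commutator $[g_0,a]$ of maximal class size, passing to the normal core $U_0$ of the centraliser of the witnesses $b_i$, and applying Lemma \ref{uxx} to $[gg_0,a]=[g,a]^{g_0}[g_0,a]$ together with a bounded transversal of $U_0$); the approximate structure then enters twice, once to force $[G,E]\leq Z(H)$ modulo a bounded normal subgroup so that Lemma \ref{three} controls $|[g,b]^H|$ for all $b\in\langle A\rangle$, and once to cover $A$ by boundedly many cosets $d_iV_0$ of the core $V_0$ attached to a commutator $[h_0,a_0]$ of globally maximal class size, after which Lemma \ref{two} yields $[H,[G,d_i^{-1}a]]\leq S:=[H,[G,a_0]]$ for every $a\in A$. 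That coset-covering transfer from a single $a_0$ to all of $A$ is precisely the conversion you left open.
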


It is worthwhile to mention that Theorem \ref{main2} was unknown even in the case where $A$ is a subgroup of $G$. It can be regarded as an extension of the aforementioned result in \cite{dieshu} that if $|x^G|\leq n$ for any commutator $x$, then the second commutator subgroup $G''$ has finite $n$-bounded order.

\section{Preliminaries} 

Let $G$ be a group generated by a set $X$ such that $X = X^{-1}$. Given an element $g\in G$, we write $l_X(g)$ for the minimal number $l$ with the property that $g$ can be written as a product of $l$ elements of $X$. A proof of the following result can be found in \cite[Lemma 2.1]{dieshu}.

\begin{lemma}\label{21} Let $G$ be a group generated by a set $X=X^{-1}$ and let $L$ be a subgroup of finite index $m$ in $G$. Then each coset $Lb$ contains an element $g$ such that $l_X(g)\leq m-1$.
\end{lemma}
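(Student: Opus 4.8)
The plan is to build up the cosets of $L$ one word-length at a time. For each integer $i\ge 0$ let $\mathcal{C}_i$ denote the set of right cosets $Lb$ that contain an element $g$ with $l_X(g)\le i$. Since the identity is the unique element expressible as the empty product, $\mathcal{C}_0=\{L\}$, and clearly $\mathcal{C}_0\subseteq\mathcal{C}_1\subseteq\cdots$, with $\bigcup_i\mathcal{C}_i$ equal to the set of all $m$ cosets because $X$ generates $G$. The whole point is to control how fast this chain saturates.

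The key step is to show that whenever $\mathcal{C}_i$ does not already contain every coset, one has $|\mathcal{C}_{i+1}|>|\mathcal{C}_i|$. Suppose instead that $\mathcal{C}_{i+1}=\mathcal{C}_i$. If $Lb\in\mathcal{C}_i$, choose $g\in Lb$ with $l_X(g)\le i$; then for every $x\in X$ the element $gx$ satisfies $l_X(gx)\le i+1$, so $Lbx=Lgx\in\mathcal{C}_{i+1}=\mathcal{C}_i$. Hence $\mathcal{C}_i$ is invariant under right multiplication by each $x\in X$. Because $X=X^{-1}$, every element of $G$ is a product of elements of $X$, so $\mathcal{C}_i$ is invariant under right multiplication by all of $G$; since this action of $G$ on the coset space is transitive and $\mathcal{C}_i\ne\emptyset$, we conclude $\mathcal{C}_i=\{Lb:b\in G\}$, contradicting the assumption.

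It follows that $1=|\mathcal{C}_0|<|\mathcal{C}_1|<\cdots$ strictly increases until all $m$ cosets are captured, so $\mathcal{C}_{m-1}$ consists of all $m$ cosets. This is exactly the assertion that every coset $Lb$ has a representative $g$ with $l_X(g)\le m-1$. I do not anticipate any real obstacle; the only point needing care is the use of the hypothesis $X=X^{-1}$ to upgrade invariance under right multiplication by $X$ to invariance under the full group $G$, which is precisely what makes the transitivity argument go through.
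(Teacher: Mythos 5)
Your proof is correct, and it is essentially the standard argument (the one given in the cited reference for this lemma): the number of cosets reachable by words of length at most $i$ strictly increases with $i$ until all $m$ cosets are covered, which forces saturation by length $m-1$. The two points that need care --- the base case $\mathcal{C}_0=\{L\}$ and the use of $X=X^{-1}$ together with transitivity of the right action on cosets to derive the contradiction --- are both handled properly.
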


The next lemma is almost obvious.

\begin{lemma}\label{22} Let $k,n,s\geq1$, and let $G$ be a group containing a set $X=X^{-1}$ such that $|x^G|\leq n$ for any $x\in X$. Let $g_1,\dots,g_s\in\langle X\rangle$ and assume that $l_X(g_i)\leq k$. Then $C_G(g_1,\dots,g_s)$ has finite $(k,n,s)$-bounded index in $G$.
\end{lemma}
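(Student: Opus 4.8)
The plan is to reduce the claim about $C_G(g_1,\dots,g_s)$ to a statement about a single element $g$ with controlled length, and then bound the index of a centralizer of one such element. First I would observe that $C_G(g_1,\dots,g_s)=\bigcap_{i=1}^s C_G(g_i)$, so by elementary index arithmetic it suffices to prove that each $C_G(g_i)$ has $(k,n)$-bounded index in $G$; the index of the intersection of $s$ subgroups is at most the product of the indices, which will still be $(k,n,s)$-bounded.

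So fix a single $g=g_i$ with $l_X(g)\le k$, say $g=x_1\cdots x_k$ with each $x_j\in X$ (padding with the identity if needed, which is harmless). The key point is that $C_G(x_1)\cap\cdots\cap C_G(x_k)\subseteq C_G(g)$, since any element centralizing every $x_j$ centralizes their product. Now each $x_j$ lies in $X$, so by hypothesis $|x_j^G|\le n$, and since conjugacy classes are finite we have $[G:C_G(x_j)]=|x_j^G|\le n$. Hence $C_G(x_1)\cap\cdots\cap C_G(x_k)$ is an intersection of at most $k$ subgroups each of index at most $n$, so it has index at most $n^k$ in $G$. Therefore $C_G(g)$ has index at most $n^k$ in $G$, which is $(k,n)$-bounded.

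Combining the two steps: $C_G(g_1,\dots,g_s)=\bigcap_i C_G(g_i)$ is an intersection of $s$ subgroups each of index at most $n^k$, hence has index at most $n^{ks}$ in $G$. This is $(k,n,s)$-bounded, as required.

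As the lemma is described as ``almost obvious,'' I do not anticipate a genuine obstacle; the only mild subtlety is making sure the bookkeeping for the index of an intersection of finitely many finite-index subgroups is handled correctly (using $[G:H\cap L]\le [G:H]\,[G:L]$ inductively), and noting that if some $l_X(g_i)<k$ one simply works with fewer factors or pads the product with identity elements, neither of which affects the bound.
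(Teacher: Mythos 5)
Your proof is correct and follows essentially the same route as the paper: write each $g_i$ as a product of at most $k$ elements of $X$, note each $C_G(x_{ij})$ has index at most $n$ since $[G:C_G(x_{ij})]=|x_{ij}^G|\le n$, and intersect all $ks$ of these centralizers to obtain a subgroup of index at most $n^{ks}$ contained in $C_G(g_1,\dots,g_s)$. The only cosmetic difference is that the paper intersects all the centralizers at once rather than first handling a single $g_i$ and then intersecting over $i$; the bound $n^{ks}$ is the same.
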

\begin{proof} Since $l_X(g_i)\leq k$, we can write $g_i=x_{i1}\ldots x_{ik}$, where $x_{ij}\in X$ and $i=1,\ldots,s$. By the hypothesis the index of $C_G(x_{ij})$ in $G$ is at most $n$ for any such element $x_{ij}$. Set $U=\cap_{i,j}C_G(x_{ij})$. We have $[G:U]\leq n^{ks}$. Since $U\leq C_G(g_1,\dots,g_s)$, the lemma follows.
\end{proof}

The next observation will play a crucial role in the proof of Theorems \ref{main1} and \ref{main2}.

\begin{lemma}\label{23} Let $X$ be a normal subset of a group $G$ such that $|x^G|\leq n$ for any $x\in X$, and let $H=\langle X\rangle$. Then the subgroup $\langle[H,x]^G\rangle$ has finite $n$-bounded order.
\end{lemma}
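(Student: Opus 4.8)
The plan is to fix one element $x\in X$ and bound the order of $N:=\langle[H,x]^G\rangle$ in terms of $n$; the statement for all elements of $X$ is just the conjunction of these. Since $x\in X\subseteq H$ we have $[G:C_G(x)]=|x^G|\le n$, hence $[H:C_H(x)]\le n$, and because $[h,x]$ depends only on the coset $C_H(x)h$ the set $\{[h,x]:h\in H\}$ has at most $n$ elements, so $[H,x]$ is generated by at most $n$ commutators. As $X$ is normal, $H\trianglelefteq G$, whence $([H,x])^g=[H,x^g]$; therefore $N=\langle [H,x']:x'\in x^G\rangle$ is generated by at most $n$ such subgroups, $G$-conjugation permutes the subgroups $[H,x']$ with $x'\in x^G$, and consequently $D:=\bigcup_{x'\in x^G}[H,x']$ is a normal subset of $G$ with $N=\langle D\rangle$ and $D\subseteq W:=\langle x^G\rangle$.

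The heart of the argument is to show that each $[H,x']$ with $x'\in x^G$ is finite of $n$-bounded order. First, for any $y,z\in X$ one has $C_G(y)\cap C_G(z)\le C_G(\langle y,z\rangle)$, so $[\langle y,z\rangle:Z(\langle y,z\rangle)]\le|y^G||z^G|\le n^2$; by Schur's theorem $\langle y,z\rangle'$ is then finite of $n$-bounded order, so $[y,z]\in\langle y,z\rangle'$ has order dividing some $L=L(n)$. Next, put $W_0=\langle(x')^H\rangle$; it is generated by at most $n$ conjugates of $x$, so, arguing as in Lemma~\ref{22}, $[H:C_H(W_0)]\le n^n$, whence $[W_0:Z(W_0)]\le n^n$ and $W_0'$ is finite of $n$-bounded order by Schur's theorem. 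Now $[H,x']\le W_0$, and for $h\in H$ write $h=y_1\cdots y_k$ with $y_j\in X$ and expand $[h,x']=\prod_{j=1}^{k}[y_j,x']^{\,y_{j+1}\cdots y_k}$. Each factor is a conjugate of some $[y_j,x']$ with $y_j,x'\in X$, hence has order dividing $L$, so its image in the abelian group $W_0/W_0'$ has order dividing $L$; therefore the image of $[h,x']$ in $W_0/W_0'$ has order dividing $L$ as well, i.e. $[h,x']^{L}\in W_0'$, so $[h,x']$ has $n$-bounded order. Thus $[H,x']$ is generated by at most $n$ elements of $n$-bounded order and satisfies $[H,x']'\le W_0'$; hence $[H,x']/[H,x']'$ is a finitely generated torsion abelian group of $n$-bounded order, and $[H,x']$ itself is finite of $n$-bounded order.

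It then remains to assemble the pieces. The normal set $D$ consists of elements of $n$-bounded order, has $n$-bounded cardinality, and lies in $W=\langle x^G\rangle$, while $[G:C_G(W)]\le n^n$ because $W$ is generated by at most $n$ conjugates of $x$. Since $C_G(W)$ centralizes $D$ and hence $N=\langle D\rangle$, the subgroup $Z(N)\supseteq N\cap C_G(W)$ has index at most $n^n$ in $N$, so by Schur's theorem $N'$ is finite of $n$-bounded order. Moreover $N/N'$ is abelian, generated by the $n$-boundedly many images of the elements of $D$, each of $n$-bounded order, hence finite of $n$-bounded order. Therefore $|N|=|N'|\cdot|N/N'|$ is $n$-bounded, as required.

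I expect the middle step — bounding the order of an arbitrary commutator $[h,x']$ with $h\in H$ — to be the main obstacle. The two ideas that make it go through are: commutators of pairs from $X$ have bounded order because they lie in the finite derived subgroup of a two-generated subgroup with boundedly small central index; and this bound is preserved on passing to the abelian quotient $W_0/W_0'$, where an arbitrary product of bounded-order elements — not of boundedly many of them — still has bounded order. Everything else is a routine application of Schur's theorem (in its quantitative form) together with the centralizer estimate of Lemma~\ref{22}.
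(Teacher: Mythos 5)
Your proof is correct, and it takes a genuinely different route from the paper's at the central step, namely bounding $|[H,x]|$. The paper invokes Lemma \ref{21} to choose coset representatives $y_1,\dots,y_m$ of $C_H(x)$ in $H$ of $X$-length at most $m-1$, expands each $[y_i,x]$ into conjugates of commutators $[y_{ij},x]$ with $y_{ij}\in X$, and places all of $[H,x]$ inside $T'$, where $T$ is generated by the boundedly many $H$-conjugates of $x$ and the $y_{ij}$; a single application of Schur's theorem to $T$ then bounds $|[H,x]|$ at a stroke. You instead observe directly that there are at most $n$ distinct commutators $[h,x]$ and bound the order of each one individually, using two ideas: $[y,z]$ has $n$-bounded order for $y,z\in X$ (Schur applied to $\langle y,z\rangle$, whose centre has index at most $n^2$), and in the abelianization of $W_0=\langle (x')^H\rangle$ an arbitrarily long product of such elements still has order dividing $L(n)$; you then recover $|[H,x']|$ from $[H,x']'\le W_0'$ together with the bounded abelianization. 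This sidesteps Lemma \ref{21} entirely --- the bounded-length representatives are precisely what your order-of-$[h,x]$ argument makes unnecessary --- at the cost of invoking quantitative Schur three times instead of once; it is closer in spirit to the arguments of \cite{dieshu}. Two small points. First, you should, as the paper does, replace $X$ by $X\cup X^{-1}$ at the outset (still normal, classes still of size at most $n$) so that writing $h=y_1\cdots y_k$ with $y_j\in X$ is legitimate; otherwise note that $[z^{-1},x']$ is a conjugate of $[z,x']^{-1}$ and so also has order dividing $L$. Second, your final assembly can be shortened: each $[H,x']$ is normal in $H$, so $N$ is just the product of at most $n$ normal subgroups of $n$-bounded order and $|N|\le\prod_{x'\in x^G}|[H,x']|$, with no further appeal to Schur --- which is exactly how the paper concludes.
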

\begin{proof} Without loss of generality we can assume that $X=X^{-1}$. Let $m$ be the maximum of indices of $C_H(x)$ in $H$ for $x\in X$. Of course, $m\leq n$. Take $x\in X$. Since the index of $C_H(x)$ in $H$ is at most $m$, by Lemma \ref{21} we can choose elements $y_1,\ldots,y_m$ in $H$ such that $l_X(y_i)\leq m-1$ and the subgroup $[H,x]$ is generated by the commutators $[y_i,x]$, for $i=1,\ldots,m$.  For any such $i$  write $y_i=y_{i1}\ldots y_{i(m-1)}$, with $y_{ij}\in X$. By using standard commutator identities we can rewrite $[y_i,x]$ as a product of conjugates in $H$ of the commutators $[y_{ij},x]$. Let $\{h_1,\ldots,h_s\}$ be the conjugates in $H$  of all elements from the set $\{x,y_{ij} \mid  1\leq i\leq m,\ 1\leq j\leq m-1\}.$ Note that the number $s$ here is $m$-bounded. This follows form the fact that $C_H(x)$ has index at most $m$ in $H$ for each $x\in X$.  Put $T=\langle h_1,\ldots,h_s \rangle$. Observe that the centre $Z(T)$ has index at most $m^s$ in $T$, since the index of $C_H(h_i)$ in $H$ is at most $m$ for any $i=1,\ldots,s$.  Thus, by Schur's theorem \cite[10.1.4]{Rob}, we conclude that the commutator subgroup $T'$ has finite $m$-bounded order. Since $[H,x]$ is contained in $T'$, deduce that the order of $[H,x]$ is $m$-bounded. Further, the subgroup $[H,x]$ is normal in $H$ and there are at most $n$ conjugates of $[H,x]$ in $G$. Therefore $\langle[H,x]^G\rangle$ is a product of at most $n$ normal subgroups, each of which has $n$-bounded order. Hence, the result.
\end{proof}

\begin{lemma} \label{uxx} Let $G$ be a group and $x,y\in G$. Assume that $|x^G|=m$ and $|(yx)^G|\leq m$. Suppose that there are $b_1,\dots,b_m\in G$ such that $x^G=\{x^{b_1},\dots,x^{b_m}\}$ and $y\in C_G(b_1,\dots,b_m)$. Then $[G,y]\leq[G,x]$.
\end{lemma}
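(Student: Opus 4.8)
The plan is to work modulo the normal subgroup $N=[G,x]$ and to show that the image of $y$ in $G/N$ is central, which is precisely the assertion $[G,y]\leq[G,x]$. We use bars to denote images in $\bar G=G/N$. By the definition of $N$, the element $\bar x$ is central in $\bar G$, and hence $\overline{x^g}=\bar x$ for every $g\in G$.

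First I would use the hypothesis $y\in C_G(b_1,\dots,b_m)$ to compute, for each $i$,
$$(yx)^{b_i}=y^{b_i}x^{b_i}=yx^{b_i}.$$
Since $x^G=\{x^{b_1},\dots,x^{b_m}\}$ consists of $m$ distinct elements, the products $yx^{b_1},\dots,yx^{b_m}$ are $m$ pairwise distinct conjugates of $yx$. Because $|(yx)^G|\leq m$, this forces the equality
$$(yx)^G=\{yx^{b_1},\dots,yx^{b_m}\}.$$

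Finally I pass to $\bar G$. The image of the set $(yx)^G$ under reduction modulo $N$ is, on the one hand, the conjugacy class $(\overline{yx})^{\bar G}$ and, on the other hand, it equals $\{\bar y\,\overline{x^{b_i}}\mid 1\leq i\leq m\}=\{\bar y\bar x\}$, a singleton. Hence $\overline{yx}$ lies in $Z(\bar G)$; since $\bar x\in Z(\bar G)$ as well, it follows that $\bar y=\overline{yx}\cdot\bar x^{-1}\in Z(\bar G)$, and therefore $[G,y]\leq N=[G,x]$. There is no real obstacle in this argument: the only point requiring attention is the counting step that pins down $(yx)^G$ exactly, after which the collapse of this set modulo $[G,x]$ yields the conclusion immediately.
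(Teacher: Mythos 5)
Your proof is correct and follows essentially the same route as the paper: the decisive step in both is the counting argument that forces $(yx)^G=\{yx^{b_1},\dots,yx^{b_m}\}$. The paper then concludes with the direct computation $(yx)^g=yx^{b_i}$, hence $[y,g]=x^{b_i}x^{-g}\in[G,x]$, rather than passing to the quotient $G/[G,x]$, but this is only a cosmetic difference.
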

\begin{proof} First we note that $(yx)^G=\{yx^{b_1},\dots,yx^{b_m}\}$. This is because all elements $yx^{b_1},\dots,yx^{b_m}$ are different and since by the hypothesis $(yx)^G$ contains at most $m$ elements, the class $(yx)^G$ must coinside with $\{yx^{b_1},\dots,yx^{b_m}\}$.  Therefore for any $g\in G$ there is $b_i\in\{b_1,\dots,b_m\}$ such that $(yx)^g=yx^{b_i}$. So $y^gx^g=yx^{b_i}$ and $[y,g]=x^{b_i}x^{-g}\in[G,x]$. The lemma follows.
\end{proof}

\section{Proof of Theorem \ref{main1}}

Recall that $A$ is a $K$-approximate subgroup of a group $G$ such that $|a^G|\leq n$ for any $a\in A$. We wish to show that $H=\langle A^G\rangle$ has finite commutator subgroup of $n$-bounded order. 

Let $E=\{e_1,\dots,e_K\}$ be a set of size $K$ such that $AA\subseteq EA$. It will be assumed that $K$ is chosen as small as possible and so for each $i=1,\dots,K$ there are $x_{i1},x_{i2},x_{i3}\in A$ such that $x_{i1}x_{i2}=e_ix_{i3}$. By Lemma \ref{23} each of the subgroups $\langle[H,x_{ij}]^G\rangle$ has $n$-bounded order. Hence, also their product has finite $(K,n)$-bounded order. Now we can pass to the quotient over the product of all $\langle[H,x_{ij}]^G\rangle$ and without loss of generality assume that the set $E$ is contained in the centre of $H$. 

Denote by $X$ the set $A^G$. Let $m$ be the maximum of indices of $C_H(x)$ in $H$ for $x\in X$. Of course, $m\leq n$.
Select $a\in A$ such that $|a^H|=m$. Choose $b_1,\ldots,b_m$ in $H$ such that $l_X(b_i)\leq m-1$ and $a^H=\{a^{b_i};i=1,\ldots,m\}$. The existence of the elements $b_i$ is guaranteed by Lemma \ref{21}. Set  $U=C_G(\langle b_1,\ldots,b_m \rangle)$. In view of Lemma \ref{22} note that the index of $U$ in $G$ is $n$-bounded. 

Let $r$ be the minimal number for which there are elements $d_1,\dots,d_r\in A$ such that $A$ is contained in the union of the left cosets $d_iU$. We fix the elements $d_i$ and denote by $S$ the product of the subgroups $\langle[H,d_i]^G\rangle$ for $i=1,\dots,r$. By Lemma \ref{23} $S$ is a product of at most $r$ normal subgroups of finite $n$-bounded order. Taking into account that $r$ is $n$-bounded conclude that $S$ has finite $n$-bounded order.

Choose any $u\in A\cap U$. Since $AA\subseteq EA$ write $ua=ex$ for suitable $e\in E$ and $x\in A$. Since $e\in Z(H)$ and $|x^H|\leq m$, it follows that $|(ua)^H|\leq m$. Recall that $U=C_G(\langle b_1,\ldots,b_m \rangle)$. Lemma \ref{uxx} implies that $[H,u]\leq[H,a]$. This happens for every choice of $u\in A\cap U$ and so $[H,(A\cap U)]\leq[H,a]$.

Let $T=\langle[H,a]^G\rangle$ and observe that by virtue of Lemma \ref{23} $T$ has finite $n$-bounded order. Choose an arbitrary element $d\in A$. There is an index $j$ such that $d\in d_jU$ and ${d_j}^{-1}d\in U$. Since ${d_j}^{-1}d\in AA$, write ${d_j}^{-1}d=ey$ for suitable $e\in E$ and $y\in A$. Observe that $e\in U$, whence $y\in U$. Taking into account that $[H,d_j]\leq S$, $[H,e]=1$, and $[H,y]\leq T$ deduce $$[H,d]=[H,d_jey]\leq[H,d_j][H,e][H,y]\leq ST.$$

Since $d$ was chosen in $A$ arbitrarily, conclude that $[H,A]\leq ST$. Recall that $H=\langle A^G\rangle$. We therefore conclude that $H'\leq ST$. Now the theorem follows from the fact that the order of $ST$ is $n$-bounded. This completes the proof of the theorem. 

\section{Proof of Theorem \ref{main2}} 

Throughout this section $A\subseteq G$ is a $K$-approximate subgroup of a group $G$ such that $|[g,a]^G|\leq n$ for each $g\in G$ and $a\in A$. We need to prove that $[G,A]$ has finite commutator subgroup of $(K,n)$-bounded order.

Let $X$ be the set of all conjugates of commutators $[g,a]$, where $g\in G$ and $a\in A$. Note that the set $X$ is symmetric. Put $H=\langle X\rangle$. By Lemma \ref{23} the subgroup  $\langle[H,x]^G\rangle$ has finite $n$-bounded order whenever $x\in X$. 
\begin{lemma} \label{one} For any $a\in A$ the subgroup $[H,[G,a]]$ has finite $n$-bounded order.
\end{lemma}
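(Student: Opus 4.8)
The plan is to imitate the structure of the proof of Theorem~\ref{main1}, replacing the role of conjugacy classes of elements of $A$ by conjugacy classes of commutators $[g,a]$. Fix $a\in A$. Set $L=[G,a]$, so that $L=\langle [g,a]^{\langle a\rangle}\rangle$ is generated by the conjugates (under $G$, say) of the commutators $[g,a]$ with $g\in G$; all these generators lie in $X$, hence in $H$, so $L\le H$ and $L$ is normal in $G$. The goal is to bound $|[H,L]|$ in terms of $n$ only. The key point is that every generator $[g,a]$ of $L$ satisfies $|[g,a]^G|\le n$, so applying Lemma~\ref{23} to the normal subset $X$ and any fixed $x=[g,a]\in X$ gives that $\langle[H,x]^G\rangle$ has $n$-bounded order. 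The difficulty is that $L$ is generated by many such $x$'s, so one cannot simply multiply finitely many copies of $\langle[H,x]^G\rangle$ together; one needs to show that boundedly many of them suffice.

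First I would invoke Lemma~\ref{22}: each generator $x=[g,a]$ of $L$ is a product of two elements of $X$ (namely $a^{-1}$ times $a^g$, both conjugates of $a^{\pm1}\cdot\text{something}$; more cleanly, $[g,a]$ itself lies in $X$, so $l_X([g,a])\le1$), hence for any fixed $x\in X$ the centralizer $C_G(x)$ has index at most $n$ in $G$. More to the point, since $L$ is generated by elements of $X$ and $|x^G|\le n$ for $x\in X$, Lemma~\ref{22} (or a direct argument) shows that for a suitable $n$-bounded collection $x_1,\dots,x_t\in X\cap L$ we have $C_G(x_1,\dots,x_t)\le C_G$ of enough conjugates to control the whole action. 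Concretely, let $m$ be the maximum of $[H:C_H(x)]$ over $x\in X$ (so $m\le n$). Pick $x=[g,a]\in X\cap L$ with $[H:C_H(x)]=m$, and use Lemma~\ref{21} to choose $b_1,\dots,b_m\in H$ with $l_X(b_i)\le m-1$ and $x^H=\{x^{b_i}\}$; set $U=C_G(\langle b_1,\dots,b_m\rangle)$, which by Lemma~\ref{22} has $n$-bounded index in $G$.

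Next I would run the cosets-of-$U$ argument from the proof of Theorem~\ref{main1}, now inside $G$ rather than inside $A$: let $r$ be minimal such that $G$ (equivalently, a generating set of $L$, or the set $\{[g,a]:g\in G\}$, which is what matters) is covered by $r$ cosets $g_iU$; $r$ is $n$-bounded because $[G:U]$ is. For $u\in U$ the commutator $[u,a]$ should be handled via Lemma~\ref{uxx}: taking $x=[g,a]$ as above with $|x^G|=m$ and using that $[ua,a]$ relates to $[u,a][u,a,a]^{-1}\cdots$ — more carefully, one checks $|[u,a]^G|\le m$ when $u\in U$ centralizes the relevant $b_i$, so Lemma~\ref{uxx} gives $[G,[u,a]]\le[G,x]$, whence $[H,[u,a]]\le[H,x]\le\langle[H,x]^G\rangle$. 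For general $g=g_ju$ with $u\in U$, the commutator identity $[g_ju,a]=[g_j,a]^u[u,a]$ expresses $[g,a]$ in terms of a conjugate of $[g_j,a]$ (one of $r$ fixed commutators) and $[u,a]\in[G,U]$. Since $[G,[g_j,a]]$ lies in the $n$-bounded normal subgroup $\langle[H,[g_j,a]]^G\rangle$ and there are only $r$ values of $j$, setting $T=\langle[H,x]^G\rangle\cdot\prod_j\langle[H,[g_j,a]]^G\rangle$ — a product of boundedly many normal subgroups of $n$-bounded order, hence itself of $n$-bounded order by Lemma~\ref{23} — one deduces $[H,[g,a]]\le T$ for every $g\in G$. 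As $L=[G,a]$ is generated by the $[g,a]$, and $T$ is normal in $G$, it follows that $[H,L]\le T$, giving the required $n$-bounded order.

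The main obstacle I anticipate is the bookkeeping around Lemma~\ref{uxx}: one must verify that for $u\in U$ the element $[u,a]$ really has $|[u,a]^G|\le m$ and that the hypothesis ``$y\in C_G(b_1,\dots,b_m)$'' of Lemma~\ref{uxx} can be arranged, which requires carefully choosing the $b_i$ and possibly first quotienting out a bounded normal subgroup (the analogue of the step in the proof of Theorem~\ref{main1} where $E$ is pushed into the centre) so that the approximate-subgroup relation $AA\subseteq EA$ can be used to control products. A secondary subtlety is that $[G,a]$ is generated by conjugates of $[g,a]$ under $\langle a\rangle$ only, not under all of $G$; but since we are ultimately bounding $[H,L]$ and $T$ is $G$-normal, and since the conjugates $[g,a]^h=[g^h,a^h]$ can be rewritten using $a^h=a[a,h^{-1}]^{-1}$ and the approximate-subgroup structure, this causes no essential trouble — it just means the reduction to boundedly many generators must be done with a little care. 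Once Lemma~\ref{one} is in place, the full Theorem~\ref{main2} will follow by covering $A$ by boundedly many cosets of a suitable finite-index subgroup and multiplying the resulting bounded normal subgroups, exactly as in Section~3.
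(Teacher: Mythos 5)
Your overall architecture is the right one and matches the paper's: pick a commutator $[g_0,a]$ of maximal class size, use Lemma~\ref{21} to get short elements $b_1,\dots,b_m$ realising that class, set $U=C_G(\langle b_1,\dots,b_m\rangle)$ of $n$-bounded index, cover $G$ by boundedly many cosets, apply Lemma~\ref{uxx}, and assemble a bounded normal subgroup $T$ as a product of boundedly many subgroups $\langle[H,\cdot]^G\rangle$ via the identity $[g_ju,a]=[g_j,a]^u[u,a]$. However, the crux --- the application of Lemma~\ref{uxx} --- is not correctly set up, and you flag it as an ``anticipated obstacle'' rather than resolving it. Taking $y=[u,a]$ and $x=[g_0,a]$, the hypothesis of Lemma~\ref{uxx} requires $|(yx)^H|\leq m$; but $yx=[u,a][g_0,a]$ is merely a product of two elements of $X$, not a commutator of the form $[\ast,a]$, so nothing bounds its class by $m$. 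The correct move (and the one the paper makes) is to take $y=[u,a]^{g_0}$, so that $yx=[u,a]^{g_0}[g_0,a]=[ug_0,a]$ is again a commutator with $a$ and hence has class size at most $m$. To then satisfy $y\in C_G(b_1,\dots,b_m)$ one must pass from $U$ to its normal core $U_0=\bigcap_{g\in G}U^g$ (still of $n$-bounded index) and take $u\in U_0$, so that $[u,a]^{g_0}\in U_0\leq U$; you never introduce this normal core, and without it the conjugate $[u,a]^{g_0}$ has no reason to centralize the $b_i$.

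Your proposed repair points in the wrong direction: you suggest quotienting out a bounded normal subgroup so that the relation $AA\subseteq EA$ can be exploited, but the approximate-subgroup structure plays no role whatsoever in this lemma. The ``approximate multiplicativity'' that drives the argument here is supplied entirely by the commutator identity $[gg_0,a]=[g,a]^{g_0}[g_0,a]$, which shows that the set of commutators with the fixed element $a$ is closed under multiplication up to conjugation and a second such commutator. (The set $E$ only enters later, in the proof of Theorem~\ref{main2} itself.) A further small slip: you define $m$ as the maximum of $[H:C_H(x)]$ over all of $X$ and then ask for a commutator $[g,a]$ with the fixed $a$ attaining that maximum; such a commutator need not exist. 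The maximum must be taken over the set $\{[g,a]:g\in G\}$ for the fixed $a$ only, which is all the argument needs since that quantity is still at most $n$.
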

\begin{proof} Choose $a\in A$. Let $m_0$ be the maximum of indices of $C_H(x)$ in $H$, where $x$ ranges through the set of commutators $[g,a]$ with $g\in G$. Select $g_0\in G$ such that $|[g_0,a]^H|=m_0$. Choose   $b_1,\ldots,b_{m_0}$ in $H$ such that $l_X(b_i)\leq m_0-1$ and $[g_0,a]^H=\{[g_0,a]^{b_i}; i=1,\ldots,m_0\}$. (The existence of the elements $b_i$ is guaranteed by Lemma \ref{21}.) Set  $U=C_G(\langle b_1,\ldots,b_{m_0} \rangle)$. Note that by Lemma \ref{22} the index of $U$ in $G$ is $n$-bounded. Let $U_0=\cap_{g\in G} U^g$ be the maximal normal subgroup of $G$ contained in $U$. Obviously, the index of $U_0$ in $G$ is $n$-bounded as well. For any $g\in G$ observe that $[gg_0,a]=[g,a]^{g_0}[g_0,a]$. Choose $g\in U_0$ and set $[g,a]^{g_0}=u$. Lemma \ref{uxx} shows that $[H,u]\leq[H,[g_0,a]]$. 

Let $c_1,\dots,c_k$ be a transversal of $U_0$ in $G$. For $i=1,\dots,k$ let $T_i$ denote the subgroup $\langle[H,[c_i,a]]^G\rangle$. In view of Lemma \ref{23} each subgroup $T_i$ has finite $n$-bounded order. Further, let $T_0$ denote the subgroup $\langle[H,[g_0,a]]^G\rangle$. Likewise, $T_0$ has finite $n$-bounded order. Let $N$ be the product of all $T_i$ for $i=0,1,\dots,k$.

Any element $g\in G$ can be written as a product $g=xc_j$ for suitable $x\in U_0$ and $j\leq k$. Then we have $[g,a]=[xc_j,a]=[x,a]^{c_j}[c_j,a]$. We now know that the images in $G/N$ of both $[x,a]$ and $[c_j,a]$ are central in $H/N$. It follows that also the image $[G,a]$ is central in $H/N$. Since $N$ has finite $n$-bounded order, the lemma follows.
\end{proof} 

Let $E=\{e_1,\dots,e_K\}$ be a set of size $K$ such that $AA\subseteq EA$. It will be assumed that $K$ is chosen as small as possible and so for each $i=1,\dots,K$ there are $x_{i1},x_{i2},x_{i3}\in A$ such that $x_{i1}x_{i2}=e_ix_{i3}$. By Lemma \ref{one} for each $x_{ij}$ the subgroup $N_{ij}=[H,[G,x_{ij}]]$ has finite $n$-bounded order. Let $N$ be the product of all these subgroups $N_{ij}$ and observe that $N$ has finite $(K,n)$-bounded order. Pass to the quotient $G/N$ and assume that $[H,[G,x_{ij}]]=1$ for all $i,j$. Then of course $[H,[G,e_i]]=1$ for all $i=1,\dots,K$. Therefore in what follows, without loss of generality, we will assume that $$[G,E]\leq Z(H).$$

Let $m$ be the maximum of indices of $C_H(x)$ in $H$, where $x$ ranges through the set $X$.

\begin{lemma}\label{three} For any $b\in\langle A\rangle$ and $g\in G$ we have $|[g,b]^H|\leq m$.  
\end{lemma}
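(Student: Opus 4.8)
The plan is to reduce the general statement about $b\in\langle A\rangle$ and $g\in G$ to the case $b\in A$ (which is covered by the hypothesis) by induction on the word length of $b$ in the generating set $A\cup A^{-1}$, using the reduction already in place, namely that $[G,E]\leq Z(H)$. First I would handle the base case: if $b\in A$ then $|[g,b]^H|\leq|[g,b]^G|\leq n$, but we in fact want the sharper bound $m$, which holds by the very definition of $m$ as the maximum of indices $[H:C_H(x)]$ over $x\in X$, since $[g,b]\in X$ and $|[g,b]^H|=[H:C_H([g,b])]\leq m$. For the inductive step, suppose $b=a_1\cdots a_t$ with each $a_i\in A\cup A^{-1}$ and assume the claim holds for all products of fewer than $t$ such elements (note that an inverse $a^{-1}$ of $a\in A$ can itself be rewritten using $AA\subseteq EA$, or one works directly; here the symmetry of $A$ as an approximate subgroup is convenient). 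Write $b=a_1 b'$ with $b'=a_2\cdots a_t$.

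The key identity is the standard commutator expansion $[g,a_1b']=[g,b']\,[g,a_1]^{b'}$. The term $[g,b']$ lies in $X$ by the inductive hypothesis more precisely it is a conjugate of a commutator and lies in $H$, with $|[g,b']^H|\leq m$. The term $[g,a_1]$ is a commutator of an element of $G$ with an element of $A$ (or of $A^{-1}$, handled via $[g,a^{-1}]=([g,a]^{-1})^{a^{-1}}$, still in $X$), so $[g,a_1]^{b'}\in X$ and $|([g,a_1]^{b'})^H|\leq m$ as well. Thus $[g,b]$ is a product of two elements of $X$, each with $H$-conjugacy class of size at most $m$, hence $[g,b]\in H$; but to bound $|[g,b]^H|$ by $m$ rather than by $m^2$ one cannot simply multiply. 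The natural move is to intersect centralizers: $C_H([g,b'])\cap C_H([g,a_1]^{b'})\leq C_H([g,b])$, which only gives index at most $m^2$. To get exactly $m$ one should instead re-examine whether $[g,b]$ itself lies in $X$: since $b\in\langle A\rangle=\langle A^G\rangle$-type generation is not quite conjugation, but $[g,b]$ is a genuine commutator $[g,b]$ with $g\in G$, and $X$ was defined as all conjugates of commutators $[h,a]$ with $a\in A$ — so $[g,b]$ with $b\notin A$ need not a priori be in $X$.

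Hence the real content, and the step I expect to be the main obstacle, is showing that $C_H([g,b])$ has index at most $m$ in $H$, not merely index bounded by a power of $m$. I would attack this by observing that, modulo the central subgroup $[G,E]\leq Z(H)$, the map $a\mapsto[g,a]$ becomes better behaved: using $AA\subseteq EA$ repeatedly, any $b\in\langle A\rangle$ is, modulo central elements, expressible so that $[g,b]$ coincides up to a central factor with some single commutator $[g,a]$ with $a$ in a coset of a bounded-index subgroup, or with a short product whose centralizer is controlled by Lemma \ref{uxx}. Concretely, I would argue by induction that $[g,b]\equiv[g,a]\pmod{Z(H)}$ for a suitable $a$ with $l_{A\cup A^{-1}}$ controlled, or more robustly that $C_H([g,b'])=C_H([g,a_1]^{b'})$ forces equality of the two relevant subgroups of index $\le m$ so their common value also centralizes the product — this is exactly the mechanism of Lemma \ref{uxx}, which says that when one commutator's class is already as large as possible, a second one lying in the relevant centralizer cannot enlarge it. So I would set $m$ to be realized by some $[g_0,a_0]$, invoke Lemma \ref{uxx} with $x=[g_0,a_0]$, and show inductively that each prefix contributes a commutator lying in the centralizer of the chosen $b_1,\dots,b_m$, so that the product still has class size $\le m$. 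The delicate point is making the choice of the $b_i$'s uniform across all $b$ and $g$ simultaneously; I expect this to require passing to the normal core $U_0$ as in Lemma \ref{one} and tracking that the central reduction $[G,E]\leq Z(H)$ kills precisely the obstructions that would otherwise inflate $m$ to $m^2$.
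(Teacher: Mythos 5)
You correctly isolate the crux of the lemma --- obtaining the bound $m$ rather than $m^2$ --- and you even name the right mechanism in passing (``$[g,b]$ coincides up to a central factor with some single commutator $[g,a]$''), but you never commit to that line or carry it out, and the routes you actually pursue do not work. The induction on word length via $[g,a_1b']=[g,b'][g,a_1]^{b'}$ genuinely loses a factor of $m$ at every step, exactly as you fear; and the proposed rescue through Lemma \ref{uxx} is not available here, because for an arbitrary $b\in\langle A\rangle$ and $g\in G$ there is no reason why $[g,b']$ should centralize the elements $b_1,\dots,b_m$ attached to a fixed extremal commutator $[g_0,a_0]$ --- that centralizer condition is precisely what Lemma \ref{one} and Lemma \ref{two} have to work to arrange, and it is arranged only for $b$ in the normal core $V_0$, not for all of $\langle A\rangle$. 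So as written the proposal has a genuine gap: it proves $|[g,b]^H|\leq m^{t}$ for words of length $t$, which is not the statement.

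The missing observation is that the approximate-subgroup condition collapses all of $\langle A\rangle$ in one stroke, with no induction on conjugacy classes at all. Since $A=A^{-1}$ and $AA\subseteq EA$, associating to the left gives $A^t=(A^{t-1})A\subseteq\langle E\rangle AA\subseteq\langle E\rangle A$ by induction on $t$, so every $b\in\langle A\rangle$ can be written as $b=ea$ with $e\in\langle E\rangle$ and a \emph{single} $a\in A$. Then $[g,b]=[g,ea]=[g,a][g,e]^a$, where $[g,e]^a\in[G,E]$ because $[G,E]$ is normal, and $[G,E]\leq Z(H)$ by the reduction already in force. Hence for every $h\in H$ one has $[g,b]^h=[g,a]^h[g,e]^a$, so $|[g,b]^H|=|[g,a]^H|\leq m$ since $[g,a]\in X$. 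The central factor is literally inert under $H$-conjugation, which is why the class size does not grow; no appeal to Lemma \ref{uxx}, no normal core, and no uniform choice of the $b_i$ is needed for this particular lemma.
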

\begin{proof} Indeed, since $b\in\langle A\rangle$, we can write $b=ea$ for suitable $a\in A$ and $e\in\langle E\rangle$. Then $[g,b]=[g,ea]\in[G,E]X$. Now use that $[G,E]\leq Z(H)$ and $|x^H|\leq m$ for any $x\in X$ and deduce the lemma.
\end{proof}

Now fix $h_0\in G$ and $a_0\in A$ such that $|[h_0,a_0]^H|=m$. Choose $h_1,\ldots,h_{m}$ in $H$ such that $l_X(h_i)\leq m-1$ and $$[h_0,a_0]^H=\{[h_0,a_0]^{h_i};i=1,\ldots,m\}.$$ The existence of the elements $h_i$ follows from Lemma \ref{21}. Set $V=C_G(\langle h_1,\ldots,h_m\rangle)$. Note that by Lemma \ref{22} the index of $V$ in $G$ is $n$-bounded. Let $V_0=\cap_{g\in G}V^g$ be the maximal normal subgroup of $G$ contained in $V$ and note that the index of $V_0$ in $G$ is $n$-bounded as well. By Lemma \ref{one} the subgroup $S=[[H,[G,a_0]]$ has finite $n$-bounded order. 

\begin{lemma}\label{two}
  $[H,[G,V_0\cap\langle A\rangle]]\leq S$.
\end{lemma}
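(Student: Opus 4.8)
\textbf{Proof plan for Lemma \ref{two}.}
The plan is to mimic the structure of the proof of Lemma \ref{one}, but now working with a fixed pair $h_0\in G$, $a_0\in A$ and the associated centralizer $V_0$. The goal is to show that for every $g\in G$ and every $v\in V_0\cap\langle A\rangle$ the commutator $[g,v]$ acts trivially on $H$ modulo $S$. First I would pick an arbitrary $v\in V_0\cap\langle A\rangle$ and an arbitrary $g\in G$, and exploit the commutator identity $[gh_0,a_0]=[g,a_0]^{h_0}[h_0,a_0]$ together with the analogous identity obtained after replacing $a_0$ by $v$-translates, so as to bring $v$ into the role played by the element $u$ in Lemma \ref{uxx}.

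The key point is that, by Lemma \ref{three}, we have $|[g,b]^H|\le m$ for every $b\in\langle A\rangle$ and every $g\in G$, and in particular $|[g,a_0 v]^H|\le m=|[h_0,a_0]^H|$ for the relevant products. Since $v\in V_0\cap\langle A\rangle\subseteq V=C_G(\langle h_1,\dots,h_m\rangle)$, the element $v$ (or rather the relevant conjugate of $[g,v]$) centralizes the conjugating elements $h_1,\dots,h_m$ that realize the class $[h_0,a_0]^H$. Writing $[h_0,a_0 v]$ or $[h_0 g, a_0]$ appropriately as a product whose ``$x$''-part is $[h_0,a_0]$ and whose ``$y$''-part is a conjugate of $[g,v]$ (using that $[G,E]\le Z(H)$ to discard the central contributions, just as in Lemma \ref{three}), Lemma \ref{uxx} then yields that $[H,\,[g,v]^{\text{(conjugate)}}\,]\le [H,[h_0,a_0]]$. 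Conjugating back and recalling that $S$ is normal and contains $[H,[G,a_0]]$ (up to the reduction already made), one concludes $[H,[g,v]]\le S$.

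Since $g\in G$ and $v\in V_0\cap\langle A\rangle$ were arbitrary, and since $[G,V_0\cap\langle A\rangle]$ is generated by such commutators $[g,v]$, it follows that $[H,[G,V_0\cap\langle A\rangle]]\le S$, as required. The main obstacle I anticipate is bookkeeping the commutator identities so that the ``$yx$'' shape demanded by Lemma \ref{uxx} genuinely appears: one must verify that after absorbing the central factors coming from $[G,E]\le Z(H)$, the product $[h_0,a_0]$ together with the conjugate of $[g,v]$ really has a conjugacy class in $G$ of size $\le m$, and that the conjugating family $\{h_1,\dots,h_m\}$ is centralized by the element playing the role of $y$. This is exactly where the choice $V=C_G(\langle h_1,\dots,h_m\rangle)$ and the passage to the normal core $V_0$ are used, so the argument should go through with care but without any genuinely new idea beyond those already deployed in Lemma \ref{one}.
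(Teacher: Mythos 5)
Your overall strategy (reduce to Lemma \ref{uxx} via a commutator identity, using Lemma \ref{three} for the class-size bound on the product and $V_0\le V=C_G(\langle h_1,\dots,h_m\rangle)$ for the centralizing condition) is the right template, but the specific decomposition you propose does not materialize, and this is exactly the point you flag as ``bookkeeping''. You insist that the ``$x$''-part be $[h_0,a_0]$ (or a controlled conjugate of it), because that is the element whose $H$-class has size exactly $m$ and is realized by $h_1,\dots,h_m$. However, the only identity of the required shape $yx$ in which $v$ appears is $[h_0,a_0v]=[h_0,v][h_0,a_0]^v$, and its ``$y$''-part is $[h_0,v]$ --- not a conjugate of $[g,v]$ for an arbitrary $g\in G$. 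The other identity you cite, $[gh_0,a_0]=[g,a_0]^{h_0}[h_0,a_0]$, does not involve $v$ at all. Carried out, your plan proves only $[H,[h_0,v]]\le S$, i.e.\ the case $g=h_0$; there is no commutator $[g',b']$ with $b'\in\langle A\rangle$ that factors as (conjugate of $[g,v]$) times (conjugate of $[h_0,a_0]$) for general $g$, and absorbing central factors from $[G,E]\le Z(H)$ does not create one. Since the lemma must control $[H,[g,v]]$ for \emph{every} $g\in G$ (that is what $[H,[G,V_0\cap\langle A\rangle]]$ means), this is a genuine gap, not a bookkeeping issue.

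The paper diverges from your plan at precisely this point: for arbitrary $g$ and $b\in V_0\cap\langle A\rangle$ it uses $[g,a_0b]=[g,b][g,a_0]^b$, keeping the \emph{same} $g$ in both factors. The ``$y$''-part is then genuinely $[g,b]$, which lies in the normal subgroup $V_0$ and hence centralizes the relevant conjugating elements, while the ``$x$''-part is $[g,a_0]^b$ rather than $[h_0,a_0]$; Lemma \ref{three} bounds $|[g,a_0b]^H|$ by $m$, and Lemma \ref{uxx} yields $[H,[g,b]]\le[H,[g,a_0]]\le S$. Note that $S$ is deliberately defined as $[H,[G,a_0]]$, so that it absorbs $[H,[g,a_0]]$ for \emph{every} $g$; your version would only ever need $S$ to contain $[H,[h_0,a_0]]$, which is another symptom that it is proving a weaker statement. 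To repair your argument you must give up the insistence that $x=[h_0,a_0]$ and let the ``$x$''-part vary with $g$ as in the paper (checking the hypotheses of Lemma \ref{uxx} for that varying $x$), or else supply a separate argument passing from $[H,[h_0,v]]$ to $[H,[g,v]]$ for general $g$ --- neither of which is present in your sketch.
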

\begin{proof} Let $b\in V_0\cap\langle A\rangle$. Lemma \ref{three} tells us that $|[g,a_0b]^H|\leq m$ for any $g\in G$. Moreover, observe that $[g,a_0b]=[g,b][g,a_0]^b$ while $[g,b]\in V_0$. Lemma \ref{uxx} shows that $[H,[g,b]]\leq [H,[g,a_0]]\leq S$. This happens for every $g\in G$ so the lemma follows.
\end{proof}

Let $r$ be the minimal number for which there are elements $d_1,\dots,d_r\in A$ such that $A$ is contained in the union of the left cosets $d_1V_0,\dots,d_rV_0$. We fix the elements $d_i$ and for each $i=1,\dots,r$ put $M_i=[H,[G,d_i]]$. By Lemma \ref{one} the product of all $M_i$ has finite $n$-bounded order. Pass to the quotient $G/\prod_iM_i$ and, without loss of generality, assume that $[G,d_i]\leq Z(H)$ for each $i$.

For an arbitrary element $a\in A$ there is $i\leq r$ such that $a\in d_iV_0$. We have $[G,a]\leq[G,d_i][G,{d_i}^{-1}a]$. By assumptions, $[H,[G,d_i]]=1$. Taking into account that ${d_i}^{-1}a\in V_0\cap\langle A\rangle$ and using Lemma \ref{two} deduce that $[H,[G,{d_i}^{-1}a]]\leq S$. Thus, $[H,[G,a]]\leq S$ whenever $a\in A$. Since $H=\prod_{a\in A}[G,a]$, it follows that $H'\leq S$. This completes the proof of the theorem.

\end{document}